\def\<{\langle}
\def\>{\rangle}
\def\ol{\overline}
\def\C{{\mathbb C}}
\def\N{{\mathbb N}}
\def\Z{{\mathbb Z}}
\def\R{{\mathbb R}}
\def\co{\colon \thinspace}
 \def\wtil{\widetilde}
\renewcommand{\phi}{\varphi}
\newtheorem{lemma}{Lemma}[section]
\newtheorem{theorem}[lemma]{Theorem}
\newtheorem{prop}[lemma]{Proposition}
\newtheorem{lem}[lemma]{Lemma}
\newtheorem{sublem}[lemma]{Sublemma}
\newtheorem{conj}[lemma]{Conjecture}
\newtheorem{example}[lemma]{Example}
\newtheorem{question}[lemma]{Question}
\begin{document}

\title[How to read the length of a braid from its curve diagram]
{How to read the length of a braid\\
from its curve diagram}
\author{Bert Wiest}
\address{IRMAR, UMR 6625 du CNRS, Universit\'e de Rennes 1, France}
\email{\tt bertold.wiest@univ-rennes1.fr}
\begin{abstract}
We prove that the Garside length a braid is equal
to a winding-number type invariant of the curve diagram of the braid.
\end{abstract}
\keywords{Braid group, Garside group, curve diagram}
\subjclass[2000]{20F36, 20F10}
\maketitle


\section{Introduction}

Is it possible to read the length of a braid~$\beta$ from the curve 
diagram of~$\beta$? The words used in this question admit different
interpretations, but in this paper we shall show that, if the word 
``length'' is interpreted in the sense of Garside, then the answer 
is affirmative:
the Garside length of a braid is equal to a winding-number type invariant
of the curve diagram which can be read from the diagram by a
simple-minded procedure.

The easiest answer one might have hoped for is that the 
length of a braid is proportional to the number of intersections of the 
curve diagram of the braid with the real line.
This answer, however, is false -- see for instance \cite{RafiDistFormula} 
and~\cite{DynnWie}. 
In these papers a relation was established between a certain
\emph{distorted} word length and the above-mentioned intersection
number, which is in turn related to the distance in Teichm\"uller space 
between a base point and its image under the braid action.

Let us recall some basic definitions and establish some notation.
We recall that Garside introduced in~\cite{Garside} a certain
set of generators for the braid group, called the Garside generators 
(or ``divisors of~$\Delta$'' or ``positive permutation braids''). 
The length of an element of the braid group with respect to this
generating set is called its \emph{Garside length}. 
Still according to Garside, every braid~$\beta$ can be written 
in a canonical way as a product of Garside generators and their 
inverses (see e.g.~\cite{ElrifaiMorton,
DehGars,GebGMCyclicSliding}).
From this canonical form one can read off 
two integer numbers $\inf(\beta)$ and $\sup(\beta)$, which are the
the maximal and minimal integers, respectively, satisfying
$$
\Delta^{\inf(\beta)}\preccurlyeq \beta \preccurlyeq \Delta^{\sup(\beta)}
$$
(Here the symbol~$\preccurlyeq$ denotes the subword partial ordering:
$\beta_1\preccurlyeq\beta_2$ means that there exists a product~$w$ of
Garside generators such that $\beta_1\cdot w=\beta_2$.
Moreover, $\Delta$ denotes Garside's half twist braid $\sigma_1 \cdot
(\sigma_2 \sigma_1)\cdot\ldots\cdot(\sigma_{n-1}\ldots\sigma_1)$.) 
One can show that Garside's canonical representative of~$\beta$ 
realizes the Garside length of~$\beta$, which is thus equal to \ 
$\max(\sup(\beta),0)-\min(\inf(\beta),0)$.

We denote~$D_n$ the $n$ times punctured disk, i.e.~the disk
$D^2=\{c\in\C\ |\ |c|\leqslant 1\}$, equipped with~$n$
punctures which are regularly spaced in the interior of
the interval $[-1,1]$.
We define~$E$ to be the diagram in $D_n$ consisting of the
real (horizontal) line segment between the leftmost and the rightmost
puncture. We define~$\ol E$ to be the diagram consisting of the real line
segment between the point~$-1$ (the leftmost point of $\partial D^2$)
and the rightmost puncture. A \emph{curve diagram of a braid~$\beta$}
is the image of $\ol E$ under a diffeomorphism of~$D_n$ representing 
the braid~$\beta$ (c.f.\cite{FGRRW}). Throughout this paper, braids
act on the right. We shall call a curve diagram \emph{reduced} if it
has the minimal possible number of intersections with the horizontal
line, and also the minimal possible number of vertical tangencies
in its diffeotopy class, and if none of the punctures lies in a
point of vertical tangency.


\section{Winding number labellings and the main result}

For a braid~$\beta$ we are going to denote~$\ol D_\beta$ the curve
diagram of~$\beta$, and~$D_\beta$ the image of~$E$ under~$\beta$
-- so $D_\beta$ is obtained from $\ol D_\beta$ simply by removing
one arc. 
We shall label each segment of $\ol D_\beta$
between two subsequent vertical tangencies by an integer number, in the
following way: the first segment (which starts on~$\partial D^2$) is
labelled~$0$, and if the label of the $i$th segment is $k$, and if the
transition from the $i$th to the $i+1$st segment is via a \emph{right}
curve, then the $i+1$st segment is labelled $k+1$. If, on the other hand,
the transition is via a \emph{left} curve, then the label of the
$i+1$st segment is $k-1$. See Figure~\ref{F:LabelEx} for an example
of this labelling. We shall call this labelling the \emph{winding
number labelling} of the curve diagram.

A more rigorous definition of this labelling is as follows. If 
$\alpha\co I \to D^2$ is a smooth parametrization of the curve diagram
$\ol D_\beta$, defined on the unit interval $I=[0,1]$, and such that
$\alpha(0)=-1$, then
we define the \emph{tangent direction function} 
$T_\alpha\co I\to \R / 2\Z$ as the angle of the tangent direction
of~$\alpha$ against the horizontal, divided by~$-\pi$. In particular,
if the arc goes straight to the right in $\alpha(t)$, then
$T_\alpha(t)=0+2\Z$. If it goes straight down, then 
$T_\alpha(t)=\frac{1}{2}+2\Z$; and if it goes to the left, then
$T_\alpha(t)=1+2\Z$. 
Now we have a unique lifting of the function~$T_\alpha$ to a function 
$\wtil{T}_\alpha\co I\to \R$ with $\wtil{T}_\alpha(0)=0$.
Finally, if $r\co \R\to\Z$ denotes the rounding function, which
sends every real number to the nearest integer (rounding \emph{down}
$n+\frac{1}{2}$), then we define the function
$$
\tau_\alpha\co I\to \R, \ t\mapsto r\circ\wtil{T}_\alpha(t)
$$
which one might call the rounded lifted tangent direction function.

Now the winding number labelling can be redefined as follows: a point~$x$
of~$D_\beta$ with non-vertical tangent direction is labelled by the
integer $\tau_\alpha(t)$, where~$t$ in~$I$ is such that $\alpha(t)=x$.

\begin{figure}[htb]
\centerline{\input{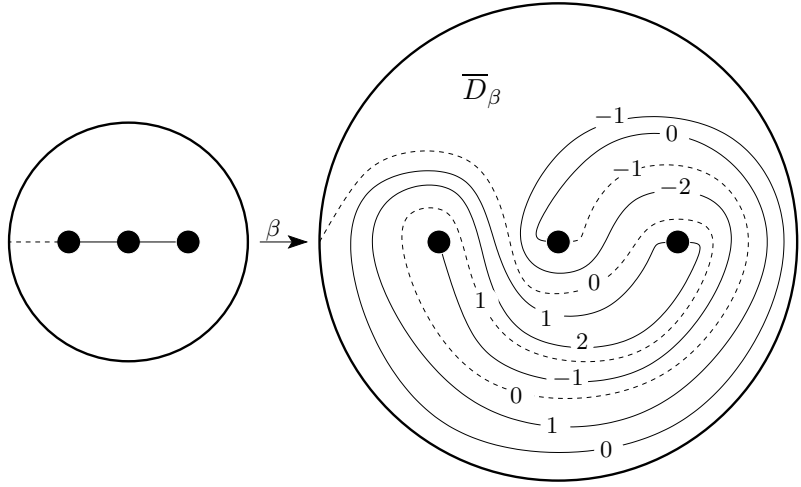}}
\caption{The curve diagram (with the first line drawn dashed) of the
braid $\beta=(\sigma_1\sigma_2^{-1})^2$. The labels of the solid arcs vary
between $-2$ and $2$. The Garside normal form of~$\beta$ is 
$\sigma_2^{-1}\sigma^{-1}_{1}\cdot \sigma^{-1}_1\cdot 
\sigma_{2}\cdot \sigma_{2}\sigma_{1}$, so its infimum is~$-2$ and
its supremum is~$2$. Also note that there is only one arc carrying
the maximal label~``$2$'' and only one arc with the minimal
label~``$-2$''.}
\label{F:LabelEx}
\end{figure}

We shall denote the largest and smallest labels occurring in~$D_\beta$ 
by $LL(\beta)$ and $SL(\beta)$. Notice that we are talking about the 
restricted diagram $D_\beta$, not the full diagram $\ol D_\beta$. 
Nevertheless, in order
to actually calculate the labels of the restricted diagram, one has to 
start by calculating the labels of the first arc (the one that starts at 
$-1$ on $\partial D^2$).

\begin{theorem}\label{T:label=length} For any braid $\beta$,
the largest label occurring in the diagram~$D_\beta$ is equal to 
$\sup(\beta)$ and the smallest label occurring is
equal to $\inf(\beta)$. In particular, the Garside length of
the braid~$\beta$ is $\max(LL(\beta),0)-\min(SL(\beta),0)$.
\end{theorem}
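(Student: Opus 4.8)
The plan is to exploit the fact that both sides of the desired equalities transform in the same way under right-multiplication by~$\Delta$, and then to induct on the number of factors in a Garside normal form.

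First I would establish a \emph{shift lemma}: right-multiplication by~$\Delta$ adds~$1$ to every winding-number label. Geometrically this is because~$\Delta$ is realised by the half-twist, whose effect on~$\ol D_\beta$ is to add one full half-turn to the winding recorded along the portion of the diagram lying among the punctures; since $T_\alpha$ measures the tangent angle divided by~$-\pi$, the lifted function $\wtil T_\alpha$ is increased by~$1$, and hence so is $\tau_\alpha$ at every point. Thus $LL(\beta\Delta)=LL(\beta)+1$ and $SL(\beta\Delta)=SL(\beta)+1$, while on the combinatorial side $\sup(\beta\Delta)=\sup(\beta)+1$ and $\inf(\beta\Delta)=\inf(\beta)+1$. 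Consequently the quantities $LL(\beta)-\sup(\beta)$ and $SL(\beta)-\inf(\beta)$ are unchanged when~$\beta$ is multiplied by a power of~$\Delta$, so it suffices to prove the theorem for positive braids, for which we may write $\beta=s_1 s_2\cdots s_k$ with each~$s_i$ a Garside generator and $s_1\cdots s_k$ in left normal form. The base case $\beta=e$ is immediate: $\ol D_e=\ol E$ is a straight segment, all labels are~$0$, and $\inf=\sup=0$.

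Next I would run an induction on~$k$, building the curve diagram of $\beta_j=s_1\cdots s_j$ by appending one simple factor at a time, tracking the largest and the smallest label in parallel. For the easy direction I would prove a \emph{monotonicity} statement: every label of a positive braid is~$\ge 0$, so $SL(\beta_j)\ge 0=\inf(\beta_j)$ throughout, and dually no label exceeds $\sup(\beta_j)$, giving $LL(\beta_j)\le\sup(\beta_j)$. The substance of the induction is the matching sharp direction, for which I would prove the following \emph{increment lemma}: when $s_{j+1}$ is appended, the largest label of $D_{\beta_{j+1}}$ exceeds that of $D_{\beta_j}$ by exactly $\sup(\beta_{j+1})-\sup(\beta_j)\in\{0,1\}$, with the analogous statement relating the smallest label to $\inf(\beta_{j+1})-\inf(\beta_j)\in\{0,1\}$.

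The heart of the argument, and the step I expect to be the main obstacle, is this increment lemma, which forces the geometry of the diagram to record the combinatorics of the normal form. I would analyse how right-multiplication by a single Garside generator alters $\wtil T_\alpha$: a positive permutation braid drags the arcs across the punctures it permutes, and near each such puncture it can add at most one full turn, so the largest label can rise by at most one. The crucial point is to match this geometric ``wrapping'' with the Garside-theoretic condition that $s_1\cdots s_{j+1}$ is left-weighted, equivalently that $\Delta\not\preccurlyeq s_j s_{j+1}$: I would argue that the failure of~$\sup$ to increase corresponds exactly to the situation in which the new factor creates no arc of strictly larger winding number, whereas an increase of~$\sup$ is realised by precisely one new arc attaining the new maximal label, as illustrated by the single top-label and bottom-label arcs in Figure~\ref{F:LabelEx}. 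Making the phrase ``at most one full turn, and exactly one when $\sup$ jumps'' precise --- and checking that the extremal label is attained on the \emph{restricted} diagram $D_\beta$ rather than only on the removed first arc --- is the delicate part. Once $LL(\beta)=\sup(\beta)$ and $SL(\beta)=\inf(\beta)$ are established for positive braids, the shift lemma propagates both equalities to all braids, and the closing formula follows at once, since by the introduction the Garside length equals $\max(\sup(\beta),0)-\min(\inf(\beta),0)=\max(LL(\beta),0)-\min(SL(\beta),0)$.
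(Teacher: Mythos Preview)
Your reduction via the $\Delta$-shift, and your ``easy direction'' $LL(\beta)\le\sup(\beta)$ (labels rise by at most one per simple factor), coincide with the paper's argument. The divergence is in the hard direction, and there your increment lemma is not established: you assert that when the left normal form grows by one factor the maximal label must jump by one, but the justification offered---matching left-weightedness of $s_j s_{j+1}$ to the creation of a new maximally labelled arc---is exactly the content of the theorem and is not made precise. Translating the combinatorial condition ``$\Delta\not\preccurlyeq s_j s_{j+1}$'' into a statement about tangent directions of $D_{\beta_j}$ would require knowing in advance where the arcs of maximal label sit and how the next factor acts on them; this is at least as hard as the result you are trying to prove, and the proposal gives no mechanism for it.

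The paper avoids this difficulty by running the induction \emph{backwards}. Instead of showing that appending a normal-form factor forces $LL$ to grow, it shows that for any braid with $SL(\beta)\ge 0$ one can construct the inverse of a simple braid~$\beta_-$ with $LL(\beta\beta_-)=LL(\beta)-1$ and $SL(\beta\beta_-)\ge 0$; the construction is an explicit puncture dance governed by a partial order on the punctures (Sublemma~\ref{L:UntangleKey}). Iterating $LL(\beta)$ times reaches the trivial diagram, so $\beta$ is a product of $LL(\beta)$ simples and hence $\mathrm{length}_{\mathrm{Gars}}(\beta)\le LL(\beta)$. This bypasses the normal form entirely for the $\ge$ inequality, which is why it succeeds where your forward induction stalls. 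The paper also handles $SL(\beta)=\inf(\beta)$ not by a parallel increment lemma but by the mirror anti-automorphism $\sigma_i\mapsto\sigma_i^{-1}$, reducing it to the $LL$ statement; your ``analogous statement for $SL$'' would face the same gap as the $LL$ version.
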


\begin{proof}[Proof of Theorem~\ref{T:label=length}] 
The proof comes down to the following lemma:

\begin{lem}\label{L:label=length}
If $\beta$ is a positive braid, then the largest label occurring in the
diagram~$D_\beta$ is equal to the Garside length of~$\beta$.
\end{lem}

Let us first see why Lemma~\ref{L:label=length} implies 
Theorem~\ref{T:label=length}:  let us assume for the moment that
Lemma~\ref{L:label=length} is true, and try to deduce 
Theorem~\ref{T:label=length}. The crucial observation is that
multiplying~$\beta$ by $\Delta^k$  
increases all four numbers (the sup, the inf, the maximal label 
of~$D_\beta$ and the minimal label of~$D_\beta$) by~$k$.
We obtain
\begin{eqnarray*}
LL(\beta) & = & LL(\beta \Delta^{-\inf(\beta)})+\inf(\beta)\\
 & \stackrel{*}{=} & 
          \mathrm{length}_{\mathrm{Gars}}(\beta \Delta^{-\inf(\beta)}) 
          + \inf(\beta)\\
 & = & \sup(\beta \Delta^{-\inf(\beta)})+\inf(\beta) = \sup(\beta) 
\end{eqnarray*}
where $\stackrel{*}{=}$ denotes that the equality follows from 
Lemma~\ref{L:label=length}. Now we make the following

{\bf Claim } If~$\beta$ is a negative braid, then the smallest label 
occurring in~$D_\beta$, multiplied by~$-1$, is equal to the Garside length 
of~$\beta$.

In order to prove this claim, we consider the image~$\ol\beta$ of~$\beta$
under the homomorphism which replaces each crossing $\sigma_i^{\pm 1}$ 
by its negative crossing $\sigma_i^{\mp 1}$. Its curve 
diagram~$D_{\ol\beta}$ is just the mirror image, with 
respect to the horizontal line, of~$D_\beta$. We observe that
$$
\inf(\beta)=-\sup(\ol\beta), \ \ \sup(\beta)=-\inf(\ol\beta), \ \ 
LL(\beta)=-SL(\ol\beta), \ \ SL(\beta)=-LL(\ol\beta).
$$
The claim now follows from Lemma~\ref{L:label=length}.

Now we have for an arbitrary braid~$\beta$ that
\begin{eqnarray*}
SL(\beta) & = & SL(\beta \Delta^{-\sup(\beta)})+\sup(\beta)\\
 & \stackrel{*}{=} & 
            -\mathrm{length}_{\mathrm{Gars}}
            (\beta \Delta^{-\sup(\beta)}) + \sup(\beta)\\
 & = & \inf(\beta \Delta^{-\sup(\beta)})+\sup(\beta) = \inf(\beta) 
\end{eqnarray*}
where $\stackrel{*}{=}$ denotes that the equality follows from the Claim.
This completes the proof of Theorem~\ref{T:label=length}, assuming
Lemma~\ref{L:label=length}.
\end{proof}

\begin{proof}[Proof of Lemma~\ref{L:label=length}]
First we shall prove that for a positive braid~$\beta$ we have 
$LL(\beta)\leqslant \mathrm{length}_{\mathrm{Gars}}(\beta)$. 
By induction, this
is equivalent to proving the following: if~$\beta$ is a positive
braid and~$\beta_+$ is a divisor of~$\Delta$, then 
$LL(\beta\cdot\beta_+)\leqslant LL(\beta)+1$.

The action on the disk of any braid~$\beta_+$ which is a divisor 
of~$\Delta$ can be realized by the following
dance of the punctures. Initially the punctures are lined up on the
real line. As a first step, perform a clockwise rotation so that 
the punctures are lined up on the imaginary axis. As a second step, 
move each of the punctures horizontally, until no more puncture lies 
precisely above any other one. In a third step, by vertical movements, 
bring all the punctures back to the horizontal line. 

Let us suppose that $\alpha\co I\to D^2$ is a parametrization of
the curve diagram~$\ol D_{\beta}$, and let us look at a segment 
$I'\subseteq I$ parametrizing a segment between two successive 
points of vertical tangency. The function $\tau_\alpha|_{I'}$
is constant, we shall denote its value by~$k$; this means that on the 
interval~$I'$, the function $\wtil T_\alpha$ takes values in the
interval $]k-\frac{1}{2},k+\frac{1}{2}]$. Now, after the first step
of the puncture dance (the rotation), we have a deformed curve diagram
parametrized by a function $\alpha'\co I\to D^2$, and we observe that
on the interval~$I'$, the function $\wtil T_{\alpha'}$ takes values in 
the interval $]k,k+1]$. The horizontal movement of the punctures
deforms the arc~$\alpha'$ into an arc~$\alpha''$, but during this
deformation no horizontal tangencies are created or destroyed;
therefore the values of $\wtil T_{\alpha''}|_{I'}$ still lie in the 
interval $]k,k+1]$. The third step (squashing the punctures
back to the real line) changes tangent directions by at most a
quarter turn, so the new function $\wtil T_{\alpha'''}$ takes
values in the interval $]k-\frac{1}{2},k+\frac{3}{2}]$ on~$I'$.
Thus on the interval~$I'$, we have $\tau_{\alpha'''}\in\{k,k+1\}$,
so the labels have increased by at most one under the action of~$\beta_+$.
This completes the proof that $LL(\beta)\leqslant 
\mathrm{length}_{\mathrm{Gars}}(\beta)$.

In order to prove the converse inequality, we shall prove that for any
braid~$\beta$ with $SL(\beta)\geqslant 0$, there is a braid~$\beta_-$ 
which is the inverse of a simple braid 
such that $LL(\beta\cdot\beta_-) = LL(\beta)-1$ and still
$SL(\beta\cdot\beta_-)\geqslant 0$. Intuitively, every braid can be 
``relaxed'' into another one with less high twisting.

Our construction of such a braid~$\beta_-$ will again be in the form
of a dance of the punctures of~$D_\beta$, where in a first step each
puncture performs a vertical movement until no two punctures lie at
the same height, in a second step the punctures are squashed onto the
imaginary axis by horizontal movements, and in a third step the punctures
are brought back to the horizontal axis by a $90^{\mathrm o}$
counterclockwise rotation of the vertical axis.

The only step that needs to be defined in a more detailed manner is the
first one. In order to do so, we first classify the kinds
of segments which one can see between successive points with vertical 
tangency in the diagram~$D_\beta$. Firstly, there are those segments which 
start in a right turn and end in a left turn (see 
Figure~\ref{F:UntangleKey}(a)); these correspond to local maxima of 
the function~$\tau_\alpha$. Secondly, there
are those segments that start in a left and end in a right turn (see 
Figure~\ref{F:UntangleKey}(b)); these correspond to local minima of the 
function~$\tau_\alpha$. Thirdly, there are those that start and end in a
right turn, and, fourthly, those that start and end in a left turn 
(see Figure~\ref{F:UntangleKey}(c) and (d)).
The key to the construction of~$\beta_-$ is the following lemma, which
is also illustrated in Figure~\ref{F:UntangleKey}.

\begin{figure}[htb]
\centerline{\input{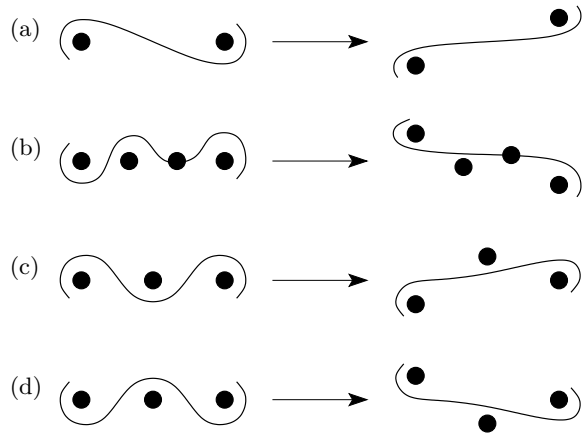}}
\caption{Four types of segments of curve between points of vertical
tangency, and how to deform them by vertical movements of the punctures.}
\label{F:UntangleKey}
\end{figure}

\begin{sublem}\label{L:UntangleKey}
There exists a diffeotopy of the disk which moves every point only
vertically up or down and which deforms the diagram~$D_\beta$ into a
diagram~$D'$ such that:\vspace{-2mm}
\begin{enumerate}
\item Arcs of the first and second type in~$D_\beta$ give rise to arcs
in~$D'$ which do not have any horizontal tangencies
\item Arcs of the third type in~$D_\beta$ give rise to arcs in~$D'$ 
which have precisely one maximum and no minimum in the vertical 
direction (and hence exactly one horizontal tangency)
\item Arcs of the fourth type in~$D_\beta$ give rise to arcs in~$D'$ 
which have precisely one minimum and no maximum in the vertical direction.
\end{enumerate} 
\end{sublem}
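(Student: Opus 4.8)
The plan is to build the required diffeotopy by repeatedly cancelling superfluous horizontal tangencies, one at a time, until every inter-tangency segment carries only those extrema that are forced by its endpoints. First I would record the local picture at the two ends of each segment. Between two consecutive points of vertical tangency the $x$-coordinate is strictly monotone along the arc (a vertical tangency is exactly a local extremum of $x$), so each segment is the graph of a function $y=g(x)$ over its $x$-interval, and a horizontal tangency of the segment is precisely an interior critical point of $g$. Since any vertical diffeotopy preserves the $x$-coordinate, it preserves the set of vertical tangencies and carries graphs to graphs; thus the whole problem takes place fibrewise over the $x$-axis. At each endpoint the arc runs straight up or straight down, and which of the two occurs is determined by the turn type together with whether the endpoint is a leftmost or a rightmost point of the segment: reading off Figure~\ref{F:UntangleKey}, a segment of type (a) or (b) has the same vertical direction ($\partial_y$-sign) at both ends, whereas a segment of type (c) or (d) has opposite vertical directions at its two ends. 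Consequently $g$ must have an even number of interior critical points in the first case and an odd number in the second; the minimal counts are $0$ and $1$ respectively, and in the type (c)/(d) case the single forced critical point is a maximum/minimum exactly as drawn.

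Next I would show that any segment can be brought down to this minimal count by vertical moves. If a segment has more critical points than forced, then it contains two adjacent critical points of opposite kind, i.e.\ a small bump (a maximum immediately followed by a minimum, or vice versa). I would cancel such a bump by a vertical isotopy supported in a thin vertical strip $\{a\le x\le b\}$ around it: over each line $x=c$ in the strip the bump meets the line in two points that bound a short sub-arc containing no puncture, and I sweep these two points together and annihilate them, smoothly in $c$ and trivially outside the strip. This move changes no $x$-coordinate, destroys exactly one maximum and one adjacent minimum, and creates no new critical point, so it strictly decreases the total number of horizontal tangencies. Iterating, and inducting on this total, produces a diagram in which every segment realises its forced minimum; this is the diagram~$D'$.

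The hard part will be globalising the cancellation: the strip $\{a\le x\le b\}$ will in general be crossed by other arcs of~$D_\beta$ and by punctures, and the sweep must extend to an honest diffeotopy of the disk that moves every point only vertically. The key point to establish is that one may always choose an \emph{innermost} bump --- one whose two strands cobound a disk in the strip meeting no puncture and no other strand --- which exists by finiteness; for such a bump the other strands crossing the strip are themselves graphs over~$x$ (they have no vertical tangency inside a sufficiently thin strip) and so can be slid vertically out of the way while the two chosen points are merged. Concretely I would realise the cancellation as the time-one map of a compactly supported vertical vector field on the strip whose flow preserves the vertical order of all strands meeting each line $x=c$ and pushes the innermost pair together; preserving this fibrewise order is exactly what guarantees that the local move assembles into a genuine (vertical) diffeotopy of~$D^2$ rather than merely reshaping one arc. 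Punctures are handled the same way, being simply additional marked points on the fibres that the order-preserving flow carries along.

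Finally I would check that the terminal diagram~$D'$ has the three properties claimed: by construction each segment has the forced number of interior critical points, so types (a) and (b) acquire no horizontal tangency, while types (c) and (d) acquire exactly one, of the stated sign. I expect the only genuine difficulty to be the coherence argument in the previous paragraph --- ensuring that the successive vertical cancellations never conflict along shared vertical lines --- which is why isolating an innermost bump at each stage, and phrasing the move as an order-preserving fibrewise flow, is the crux of the proof.
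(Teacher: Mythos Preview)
Your approach is genuinely different from the paper's. The paper does not cancel horizontal tangencies one pair at a time; instead it defines a relation on the set of \emph{punctures}: declare $p_1$ to be below $p_2$ whenever some vertical-tangency-free segment of~$D_\beta$ has $p_1$ underneath it (or on it) and $p_2$ above it (or on it). One checks that this is a partial order, extends it arbitrarily to a total order, and then performs a single vertical diffeotopy sliding the punctures so that their heights realise this total order. The point is that once every segment has all of its ``below'' punctures genuinely lower than all of its ``above'' punctures, the segment can be drawn with only the forced horizontal tangencies. This global, one-shot repositioning replaces your induction entirely; what it buys is that there is no coherence problem to manage between successive local moves.

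Your inductive scheme could be made to work, but as written there is a real confusion to fix. You correctly note that each segment is the graph of a function $y=g(x)$, so horizontal tangencies are interior critical points of~$g$; but then a graph meets each vertical line $x=c$ in \emph{one} point, not two, so the sentence ``over each line $x=c$ in the strip the bump meets the line in two points that bound a short sub-arc'' cannot be right, and there is no innermost-bigon argument available in this picture. What you actually need is that in a thin strip around an adjacent max--min pair of~$g$, the neighbouring graphs immediately above and below~$g$ leave enough vertical room for~$g$ to be replaced, within its slot, by a graph with two fewer critical points; equivalently, that the neighbours do not themselves force a bump on~$g$ there. This is where an ``innermost'' choice really helps, but you must say innermost with respect to what (for instance: among all excess critical points on all segments, pick one whose height is extremal on its vertical fibre within its slot), and then check that the order-preserving fibrewise flow you describe cancels that pair without creating new critical points on the neighbouring graphs. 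Until that step is made precise, the induction does not close; the paper's partial-order argument sidesteps exactly this difficulty.
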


\begin{proof}[Proof of Sublemma~\ref{L:UntangleKey}]
We define a relation on the set of punctures in the diagram~$D_\beta$
by saying that a puncture~$p_1$ is \emph{below} a puncture~$p_2$ if 
there is some segment of~$D_\beta$ which contains no vertical tangencies
and such that~$p_1$ lies below the segment, or possibly on it, 
and such that~$p_2$ lies above, or possibly on, the segment -- but at 
most one of the two punctures is supposed to lie on the segment. 
We observe that 
this relation is a partial order. Let us choose any extension of
this partial order to a total order. Now the desired diffeotopy can 
be obtained by sliding the punctures of~$D_\beta$ up or down so that 
their vertical order corresponds to the total order just defined.
\end{proof}

This completes our description of the first step of the puncture dance,
and hence the definition of the braid~$\beta_-$. 

Now we have to prove that the action of~$\beta_-$ simplifies the
curve diagram as claimed. Let us look at an arc of~$D_\beta$ between
two successive vertical tangencies which carries the label~$LL(\beta)$,
i.e.~the largest label that occurs. Such an arc is of the first type, 
in the above
classification. Let us suppose that $\alpha\co I\to D^2$ is a 
parametrization of the curve diagram~$D_\beta$, and that~$I'$ is
a subinterval of~$I$ such that $\alpha|_{I'}$ parametrizes the arc
under consideration. On the interval~$I'$, the function $T_{\alpha}$
takes values in the range $]LL(\beta)-\frac{1}{2},LL(\beta)+\frac{1}{2}]$.
The first step of our particle dance, however, deforms~$\alpha$ into
a parametrized arc~$\alpha'$ such that~$T_{\alpha'}$ only takes values
in the range~$]LL(\beta)-\frac{1}{2},LL(\beta)[$.

Now the second step deforms the arc~$\alpha'$ into an arc~$\alpha''$.
Since this step only moves points horizontally, it never creates or
destroys a point of horizontal tangency in the diagram, so on the
interval~$I'$ the function~$T_{\alpha''}$ only takes values
in the range~$]LL(\beta)-1,LL(\beta)[$. Finally, the third step acts
as a $90^{\mathrm o}$ counterclockwise rotation on the arc, 
so the values of~$T_{\alpha'''}$ lie in the 
interval~$]LL(\beta)-\frac{3}{2},LL(\beta)-\frac{1}{2}[$.
Therefore the segment parametrized by $\alpha'''|_{I'}$ is
part of a segment in the curve diagram of~$D_{\beta \beta_+}$
which is labelled~$LL(\beta)-1$.

Next we claim that an arc of~$D_\beta$ which is labelled by~$SL(\beta)$,
i.e.~the minimal possible label, gives rise in~$D_{\beta \beta_+}$
to part of an arc which is still labelled $SL(\beta)$, and in particular
still positive. This proof is similar to the argument just presented,
and it is left to the reader.
\end{proof}


\section{Are maximally labelled arcs rare?}

There is one additional observation to be made about our
labellings of curve diagrams. Not only do the extremal labels determine
the Garside length of the braid, but moreover it seems that these extremal labels occur only very rarely in the diagram.

\begin{example}
In the curve diagram of the braid in Figure~\ref{F:LabelEx}, there is only one arc segment carrying the maximal label~$2$, and only one carrying the minimal label~$-2$.
\end{example}

\begin{example}
For $\beta_1=\sigma_3^{-1}\sigma_1^{-1}\sigma_2^{-1}\sigma_1^{-1}\sigma_5^{-1}\sigma_4^{-1}\sigma_5^{-1}.\sigma_3 \sigma_2 \sigma_4 \sigma_3 \sigma_1 \sigma_2 \sigma_5 \sigma_4 \sigma_3\in B_6$ we have $\sup(\beta_1)=1$ and $\inf(\beta_1)=-1$.
The curve diagram of $\beta$ is shown in Figure~\ref{F:counterex}(a).
In this diagram, there are two parallel arcs carrying the maximal label~1.
In a similar manner, one can construct curve diagrams of braids in $B_{2k+2}$, with $k$ parallel arcs labelled $1$, where all labels belong to $\{-1,0,1\}$.
\end{example}

\begin{figure}[htb]
\centerline{\input{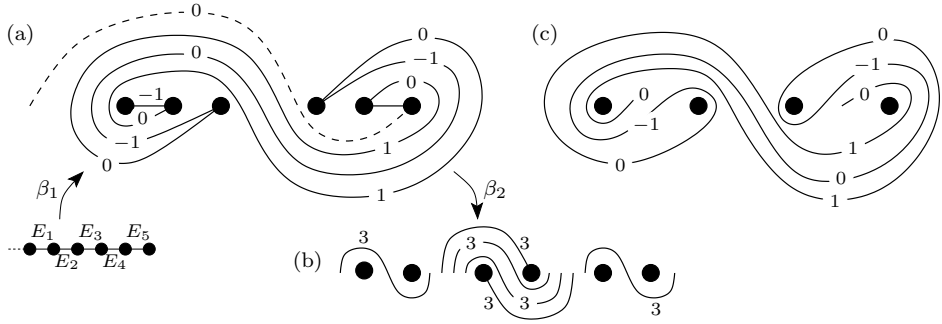}}
\caption{(a) The curve diagram of the braid $\beta_1=\sigma_3^{-1}\sigma_1^{-1}\sigma_2^{-1}\sigma_1^{-1}\sigma_5^{-1}\sigma_4^{-1}\sigma_5^{-1}.\sigma_3 \sigma_2 \sigma_4 \sigma_3 \sigma_1 \sigma_2 \sigma_5 \sigma_4 \sigma_3$. There are two parallel arcs labelled 1  (b) The maximally labelled arcs of the curve diagram of the braid $\beta=\beta_1\beta_2$, with $\beta_2=\sigma_2^3\sigma_4^3 \sigma_1^2 \sigma_3^2 \sigma_5^2$. (c) An arc with~$k$ parallel maximally labelled subarcs in~$D_{2k}$, here for $k=2$.} 
\label{F:counterex}
\end{figure}

We did not manage to construct any worse example, i.e., any example of a curve diagram where there are~$k$ parallel arc segments all carrying the maximal label, in a disk with fewer than $2k+2$ punctures.

\begin{example}
In the curve diagram of the 6-strand braid $\beta=\beta_1\beta_2$, with~$\beta_1$ as in the previous example and $\beta_2=\sigma_2^3 \sigma_4^3 \sigma_1^2 \sigma_3^2 \sigma_5^2$, there are six subarcs carrying the maximal label 3. The curve diagram is quite complicated, so only the maximally labelled arcs are shown in Figure~\ref{F:counterex}(b).
\end{example}

We did not manage to construct any braid on $n$ strands whose curve diagram has more than $n$ maximally labelled arcs.

\begin{example}
It is possible to construct an arc in $D_{2k}$ with $k$ parallel, maximally labelled subarcs, for any integer $k$ with $k\geqslant 2$ -- see Figure~\ref{F:counterex}(c) for an example with $k=2$. We did not manage to construct an arc with~$k$ parallel, maximally labelled subarcs in a disk with fewer than~$2k$ punctures.

\end{example}

In order to make precise the assertion that maximally labelled arcs are rare, let us call ``the $i$th arc of the curve diagram of~$\beta$'' the image under the action of $\beta$ of the straight line segment $E_i$ between the $i$th and $i+1$st puncture.


\begin{conj}
For any braid $\beta\in B_n$ and any $i\in\{1,\ldots,n-1\}$, the $i$th arc of the curve diagram, equipped with the winding number labelling, contains no two parallel arc segments which both carry the maximal label.
\end{conj}

For instance, this conjecture asserts that the arc in Figure~\ref{F:counterex}(c) cannot occur in a curve diagram with maximal label~1. This conjecture, if true, would place a bound depending only on~$n$ on the number of arcs carrying the maximal winding number labelling.

\section{Outlook}\label{S:Outlook}

This paper was motivated by the question ``What do quasi-geodesics in
braid groups really look like?''. More precisely, it is an experimental 
observation that any reasonable way of untangling the curve diagram of
a braid~$\beta$ yields a quasi-geodesic representative 
of~$\beta$ \cite{WiestRelax}. However, the question what precisely is ``reasonable'' has turned out to be very difficult. 

An essential portion of the answer seems to be provided by the insight of
Masur and Minsky \cite{MM2} that for any braid (or mapping class) there
are only finitely many subsurfaces whose interior is tangled by the
action of the braid, and by the Masur-Minsky-Rafi distance 
formula~\cite{RafiDistFormula}. It is, however, not clear how
these results can be used in practice to prove, for instance, that
the Bressaud normal form~\cite{Bressaud,DDRW2} or the 
transmission-relaxation normal form~\cite{DynnWie} are quasi-geodesic.
It would be very useful to have a more practical, or algorithmic, 
version of these ideas.

Returning to ``reasonable'' ways of untangling, let us 
denote~$\Delta_{i,j}$ the braid in which strands number 
$i,i+1,\ldots,j$ perform a half twist. Let us define the 
$\tau$-length of a Garside-generator 
in the following way: it is its length when written as a word in the
generators $\Delta_{i,j}$ (equivalently, it is the number of half
Dehn twists along round curves in $D_n$ needed to express it). Now,
given a braid $\beta$, put it in right Garside normal form,
and add up the lengths of its factors. We shall call the result the
$\tau$-length of $\beta$.

For a subdisk $D\subseteq D_n$ which is round (contains punctures
number $i,i+1,\ldots,j$ for some $1\leqslant i<j\leqslant n$),
and a braid $\beta$, we define the \emph{tangledness} of
the curve diagram of~$\beta$ in~$D$ in the following way.
The intersection of the curve diagram of~$\beta$ with~$D$ consists
of a (possibly very large) number of arcs, which inherit a labelling
from the winding number labelling of the full diagram.
Suppose that you shift the labelling of each arc inside the subdisk 
so that the two extremities of each arc \vspace{-2mm}
\begin{itemize}
\item either both lie in a segment labelled 0, or
\item one lies in a segment labelled 0 and the other lies in a segment
 labelled~1.\vspace{-2mm}
\end{itemize}
Now for each arc take
$$(\text{the largest label} - \text{the smallest label})
\text{ or }
(\text{the largest label} - 1 - \text{the smallest label})$$
according to the type of the arc. Take the maximum of these quantities
over all arcs. That's the tangledness. 

\begin{conj}\label{C:TauLengthCong}
Suppose that in the curve diagram of a  braid $\beta$ there is a round 
disk whose interior curve diagram has strictly positive tangledness.
Suppose that~$\beta_+$ is a Garside generator or the inverse of a Garside
generator, that it
moves only strands inside the round disk, and that its action reduces 
the tangledness of the diagram inside the round disk. Then $\beta\beta_+$ 
has smaller $\tau$-length than $\beta$.
\end{conj}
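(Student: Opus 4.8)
The plan is to reduce Conjecture~\ref{C:TauLengthCong} to a \emph{relative} version of Theorem~\ref{T:label=length}, localised to the round disk~$D$, and then to translate the resulting decrease in local complexity into a decrease of the $\tau$-length. The one hypothesis I would lean on throughout is that $\beta_+$ moves only strands inside~$D$: a representing diffeomorphism can then be taken to be supported in~$D$, so that the curve diagrams of~$\beta$ and of~$\beta\beta_+$ coincide outside~$D$ and differ only in the arcs meeting the interior of~$D$. In particular the tangledness of any round disk disjoint from~$D$ is left unchanged by the move.

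First I would establish a relative analogue of Theorem~\ref{T:label=length}. Viewing~$D$, with its punctures numbered $i,\ldots,j$, as a punctured disk $D_{j-i+1}$ in its own right, the intersection of the curve diagram of~$\beta$ with~$D$ is a system of labelled arcs; after the per-arc renormalisation built into the definition of tangledness (so that each arc has its endpoints in segments labelled~$0$, or~$0$ and~$1$), the tangledness records only the \emph{internal} winding variation of the worst arc, i.e.\ the quantity that plays, inside~$D$, the role of $LL-SL$ in Theorem~\ref{T:label=length}. The key point is that the entire machinery of Lemma~\ref{L:label=length} and Sublemma~\ref{L:UntangleKey} localises: the puncture dance and the vertical untangling move can be performed \emph{inside~$D$ only}, which is exactly the class of motions realised by elements of the parabolic subgroup $B_D\cong B_{j-i+1}$ generated by the half twists supported in~$D$. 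Running those arguments verbatim inside~$D$ should show that multiplying by an inverse simple braid supported in~$D$ lowers the tangledness by one, and, conversely, that the tangledness cannot be reduced without spending such a move.

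Next I would relate this to the $\tau$-length. Expanding every factor of the right Garside normal form of~$\beta$ in the generators $\Delta_{p,q}$, each letter $\Delta_{p,q}$ is a half twist of a round subdisk, so the $\tau$-length is organised, letter by letter, according to round subdisks. The goal is a monotonicity statement: the total number of letters $\Delta_{p,q}$ whose round disk interacts essentially with~$D$ is controlled by, and strictly decreases with, the tangledness in~$D$, while letters associated to disks disjoint from~$D$ are untouched by the move of the previous paragraph. Granting this, the reduction of tangledness produced by~$\beta_+$ removes at least one such letter and adds none elsewhere, giving that the $\tau$-length of~$\beta\beta_+$ is strictly smaller than that of~$\beta$.

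The hard part will be this last step, because passing from~$\beta$ to~$\beta\beta_+$ forces a recomputation of the \emph{global} Garside normal form, and a local change can in principle cascade through the normal form; it is not automatic that only the letters associated with~$D$ are affected. This is precisely a subsurface-projection phenomenon of the kind isolated by Masur and Minsky~\cite{MM2} and quantified by the distance formula~\cite{RafiDistFormula}: projections to disks disjoint from~$D$ should be invariant, and projections to disks properly containing~$D$ should move only by a bounded amount. The genuine difficulty --- and where I would expect to need a new idea rather than a routine adaptation --- is to upgrade these \emph{coarse} stability statements into the \emph{exact} monotonicity the conjecture demands; a naive exact distance formula is too much to hope for, so the argument would instead have to pin each unit of tangledness to a single, canonically determined Garside letter, perhaps via a sharpening of Proposition~\ref{P:ExtremalRare} that makes each relaxation move witnessed by a unique extremal arc.
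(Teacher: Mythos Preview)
The statement you are attempting to prove is labelled \emph{Conjecture} in the paper, and indeed the paper offers no proof of it; it appears in the Outlook section precisely as an open problem. So there is no ``paper's own proof'' against which to compare your proposal.

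As a proof, your proposal has a genuine gap, and to your credit you name it yourself. The first two paragraphs are plausible: the relative version of Theorem~\ref{T:label=length} inside a round subdisk~$D$ is a reasonable target, and the localisation of the puncture dance to~$D$ is the right picture. But the third paragraph is where the argument is supposed to bite, and it does not. The $\tau$-length is defined via the \emph{right Garside normal form} of~$\beta$, and multiplying on the right by a simple braid supported in~$D$ does force a global recomputation of that normal form. You assert a ``monotonicity statement'' --- that letters $\Delta_{p,q}$ associated to disks disjoint from~$D$ are untouched --- but this is exactly what is unproven and is the content of the conjecture. The Masur--Minsky and Rafi results you invoke are coarse, with additive and multiplicative constants, whereas the conjecture demands a strict, unit-by-unit decrease; you yourself observe that upgrading coarse stability to exact monotonicity ``would need a new idea,'' which is an admission that the proof is incomplete.

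In short: your outline is a sensible strategy for attacking the conjecture, and it correctly isolates the obstruction, but it is not a proof. The paper does not claim one either.
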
\vspace{-1mm}

\begin{question} {\rm 
For a pseudo-Anosov braid~$\beta$ we can look at the maximally and
minimally labelled arcs in the curve diagrams of \emph{high powers} 
of~$\beta$, and, by passing to the limit, in a train track or in the 
stable foliation of~$\beta$. 
The obvious question is: what do the positions of these arcs tell us? 
Could they, for instance, be helpful for solving the conjugacy problem?}
\end{question}


\begin{question} {\rm Is there an analogue of the main result 
(Theorem~\ref{T:label=length})
for $Out(F_n)$, or at least a substiantial subgroup of $Out(F_n)$?}
\end{question}

{\bf Acknowledgements } This paper is a branch of the paper~\cite{GMWred},
and I am extremely grateful to Juan Gonz\'alez-Meneses for many helpful 
discussions. I thank Tetsuya Ito who played an important role in finding 
a mistake in an earlier version of Section~3. In its current form, 
Section~3 is joint work with him.


\end{document}